\newtheorem{theorem}{Theorem}[section]
\newtheorem{definition}[theorem]{Definition}
\newtheorem{lemma}[theorem]{Lemma}
\theoremstyle{remark}
\newtheorem{remark}[theorem]{Remark}
\numberwithin{equation}{section}
\def\RR{{\mathbb R} }
\begin{document}

%%%%%%%%%%%%%%%%%%%%%%%%%%%%%%%%%%%%%%%%%%%%%%%%%%%%
%Title
%%%%%%%%%%%%%%%%%%%%%%%%%%%%%%%%%%%%%%%%%%%%%%%%%%%%
\title[The $p-$Laplacian equation with dynamical boundary conditions]{THE LIMIT AS $p\rightarrow\infty$ FOR THE $p-$LAPLACIAN EQUATION WITH DYNAMICAL BOUNDARY CONDITIONS}
%%%%%%%%%%%%%%%%%%%%%%%%%%%%%%%%%%%%%%%%%%%%%%%%%%%%%%%%%%%%%%%%%%%%%%%%%%%
%Authors
%%%%%%%%%%%%%%%%%%%%%%%%%%%%%%%%%%%%%%%%%%%%%%%%%%%%%%%%%%%%%%%%%%%%%%%%%%%

\author[E. \"{O}zt\"{u}rk]{Eylem \"{O}zt\"{u}rk}
\address{Eylem \"{O}zt\"{u}rk \hfill\break\indent 
Department of Mathematics, 
\hfill\break\indent Hacettepe University,
\hfill\break\indent 06800 Beytepe, 
\hfill\break\indent Ankara, Turkey. }
\email{eyturk1983@gmail.com}
    
\author[J. D. Rossi]{Julio D. Rossi}
	\address{Julio D. Rossi
	\hfill\break\indent
        Departamento  de Matem{\'a}tica, FCEyN,\hfill\break\indent
        Universidad de Buenos Aires,
        \hfill\break\indent Pabellon I,
         Ciudad Universitaria (C1428BCW),
         \hfill\break\indent Buenos Aires, Argentina.}
\email{jrossi@dm.uba.ar}

\begin{abstract} In this paper we study the limit as $p\to \infty$ in the evolution problem
driven by the $p-$Laplacian with dynamical boundary conditions. We prove that the natural
energy functional associated with this problem converges to a limit in the sense of Mosco convergence 
and as a consequence we obtain convergence of the solutions to the evolution problems. 
For the limit problem we show an interpretation in terms of optimal mass transportation and provide
examples of explicit solutions for some particular data. 

\smallskip

\noindent \text{Keywords.} {$p-$Laplacian, Dynamical boundary conditions, Mosco convergence.}

\smallskip

\noindent MSC2020: {35K20, 35K55, 35K92, 47J35}
%35K20  	Initial-boundary value problems for second-order parabolic equations
%35K55 Nonlinear parabolic equations
%35K92  	Quasilinear parabolic equations with $p$-Laplacian
%47J35  	Nonlinear evolution equations
\end{abstract}

\maketitle

\centerline{\it To the memory of Alan Lazer, a great mathematician.}

\section{Introduction}
Our main purpose in this paper is to study a nonlinear diffusion equation
obtained as the limit as $p\rightarrow\infty$ to the $p-$Laplacian with dynamical boundary conditions.
More precisely, we look for the limit as $p\rightarrow\infty$ of the solutions to the following problem;
\begin{equation} \label{p-lapla-intro}  
\begin{cases}
\displaystyle 0= \Delta_p u (x,t), & x\in\Omega, \, t>0,\\[7pt]
\displaystyle \frac{\partial u}{\partial t}(x,t) + |\nabla u|^{p-2} \frac{\partial u}{\partial \eta} (x,t) = f(x,t), & x\in {\partial \Omega}, \, t>0,\\[7pt]
u(x,0)=u_0(x),  & x\in{\partial \Omega}.
\end{cases}
\end{equation}
Here $\Omega \subset \mathbb{R}^N$ is a bounded smooth domain, $\frac{\partial u}{\partial \eta}$ denotes the outer normal derivative of $u$ and 
$f$ is a nonnegative function that represents a given source term localized on $\partial \Omega$, 
which is interpreted physically as adding material to an evolving system, within which mass particles are continually rearranged by diffusion
( in this kind of model it is assumed that diffusion is much faster inside the domain than on the boundary, hence
the time derivative appears only in the boundary condition).

Associated with this evolution problem we have the following functional
$E_p : L^2 (\partial \Omega) \mapsto \mathbb{R} \cup \{+\infty\}$,
\begin{equation} \label{Ep}
E_p (u) = \left\{
\begin{array}{ll}
\displaystyle \min_{v \in W^{1,p} (\Omega), trace (v)=u} \frac1p \int_\Omega |\nabla v|^p  \qquad & u \in trace (W^{1,p} (\Omega)), \\[7pt]
+\infty \qquad & u \not\in trace (W^{1,p} (\Omega)).
\end{array}
\right.
\end{equation}

As we mentioned before our aim is to look for the limit as $p\to \infty$ of the solutions $u_p$ to \eqref{p-lapla-intro}.
To this end we use a general result by U. Mosco, see \cite{Mosco1,Mosco2}: if the associated functionals converge to a
limit functional
(in an adequate sense, that roughly speaking, means convergence of the epigraphs, see Section \ref{sect-prelim} for the precise definition) 
then the corresponding solutions
to the associated evolution problems converge to the solution associated with the limit functional. 

The limit of the functionals $E_p$ as $p\to \infty$ is given by
$E_\infty : L^2 (\partial \Omega) \mapsto \mathbb{R} \cup \{+\infty\}$,
\begin{equation} \label{Einfty}
E_\infty (u) = \left\{
\begin{array}{ll}
\displaystyle 0  \qquad & u \in A_\infty, \\[7pt]
+\infty \qquad & u \not\in A_\infty,
\end{array}
\right.
\end{equation}
with
$$
A_\infty = \Big\{ u \in C(\partial \Omega) : \exists v :\overline{\Omega} \mapsto \mathbb{R} \mbox{ with } |\nabla v|\leq 1 \mbox{a.e } \Omega,
v|_{\partial \Omega} = u \Big\}.
$$

Our first result is given by.

\begin{theorem} \label{teo.Mosco}
The functionals $E_p$ converge to $E_\infty$ as $p\to \infty$ in the Mosco sense. 
\end{theorem}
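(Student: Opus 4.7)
The plan is to verify the two defining conditions of Mosco convergence: (i) a liminf inequality $\liminf_p E_p(u_p) \geq E_\infty(u)$ whenever $u_p \rightharpoonup u$ weakly in $L^2(\partial\Omega)$, and (ii) a recovery sequence $u_p \to u$ strongly in $L^2(\partial\Omega)$ with $\limsup_p E_p(u_p) \leq E_\infty(u)$ for every $u \in L^2(\partial\Omega)$.

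For the liminf part, I would assume $u_p \rightharpoonup u$ in $L^2(\partial\Omega)$ and, up to a subsequence, that $L := \lim_p E_p(u_p) < \infty$ (otherwise there is nothing to prove). For each $p$ pick a near-minimizer $v_p \in W^{1,p}(\Omega)$ with $\text{tr}(v_p)=u_p$ and $\frac{1}{p}\int_\Omega|\nabla v_p|^p \leq E_p(u_p)+1/p$, which forces $\|\nabla v_p\|_{L^p(\Omega)} \leq (p(L+1))^{1/p}$, a bounded quantity with $\limsup \leq 1$. Hölder's inequality then gives, for every fixed $q \geq 1$ and all large $p$, the uniform bound $\|\nabla v_p\|_{L^q(\Omega)} \leq |\Omega|^{1/q - 1/p}\|\nabla v_p\|_{L^p(\Omega)}$. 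To promote these gradient bounds to bounds on $v_p$ itself, I would split $v_p$ into its mean on $\Omega$ plus an oscillation; Poincaré-Wirtinger controls the oscillation in $W^{1,q}$, while the mean is controlled through the bounded trace norm $\|u_p\|_{L^2(\partial\Omega)}$ via continuity of the trace operator. A diagonal extraction then produces $v_p \rightharpoonup v$ weakly in $W^{1,q}(\Omega)$ for every finite $q$. Weak lower semicontinuity applied to each $q$-norm yields $\|\nabla v\|_{L^q(\Omega)} \leq |\Omega|^{1/q}$, and letting $q\to\infty$ gives $|\nabla v|\leq 1$ a.e.\ in $\Omega$, so $v$ has a $1$-Lipschitz representative. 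Weak continuity of the trace operator combined with $u_p\rightharpoonup u$ in $L^2(\partial\Omega)$ identifies $\text{tr}(v)=u$, so $u\in A_\infty$ and $E_\infty(u)=0 \leq L$.

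For the recovery sequence, if $u\notin A_\infty$ the bound is trivial since $E_\infty(u)=+\infty$. If $u\in A_\infty$, let $v$ be a $1$-Lipschitz extension of $u$ and take the constant sequence $u_p\equiv u$; since $u\in C(\partial\Omega)$ is bounded, $u_p\to u$ strongly in $L^2(\partial\Omega)$, and $v\in W^{1,\infty}(\Omega)\subset W^{1,p}(\Omega)$ is admissible in the variational definition of $E_p(u)$, so
\[
E_p(u) \leq \frac{1}{p}\int_\Omega|\nabla v|^p \leq \frac{|\Omega|}{p}\to 0 = E_\infty(u).
\]

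I expect the main obstacle to be the liminf step. The estimate $\|\nabla v_p\|_{L^p}\leq (p(L+1))^{1/p}$ is delicate because the factor $(Cp)^{1/p}$ converges to $1$ only after careful manipulation, and one must then carefully combine Hölder, Poincaré-Wirtinger, and trace continuity to extract a single weak limit $v$ lying in every $W^{1,q}(\Omega)$ and to correctly identify its boundary value with $u$; the recovery sequence, by contrast, is essentially a one-line computation once the admissibility of the $1$-Lipschitz extension is noted.
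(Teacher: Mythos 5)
Your proposal is correct and follows essentially the same route as the paper: the constant recovery sequence $u_p\equiv u$ with the $1$-Lipschitz extension as competitor for the $\limsup$ inequality, and for the $\liminf$ inequality the bound $\|\nabla v_p\|_{L^p(\Omega)}\leq (pC)^{1/p}$, H\"older to get uniform $W^{1,q}$ gradient bounds, weak compactness, the passage $q\to\infty$ to conclude $|\nabla v|\leq 1$ a.e., and identification of the trace of the weak limit with $u$. Your mean-plus-oscillation step to control $\|v_p\|_{L^q(\Omega)}$ (needed before extracting a weakly convergent subsequence in $W^{1,q}(\Omega)$) is actually a point the paper glosses over, so your version is, if anything, slightly more complete.
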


As a consequence we have the convergence of the solutions to 
our evolution problem \eqref{p-lapla-intro}.

\begin{theorem} \label{corol.converg}
Let $u_p(x,t)$ be the solution of the problem \eqref{p-lapla-intro} with a fixed initial condition $u_0 \in \overline{A_\infty}^{L^2 (\partial \Omega)}$ 
and a fixed right hand side $f \in L^1 (0,T: L^2 (\partial \Omega))$.

Then, 
\begin{equation} \label{eq.conver}
u_p \to u_\infty
\end{equation}
as $p\to \infty$ in $C([0,T]: L^2(\partial \Omega))$, that is,
$$
\lim_{p\to \infty } \max_{t\in [0,T]} \| u_p(\cdot,t) - u_\infty (\cdot,t) \|_{L^2(\partial \Omega)} = 0.
$$
Moreover, the limit $u_\infty$ is characterized as the solution to 
\begin{equation}\label{eq-limite}
\left\{
\begin{array}{ll}
\displaystyle f(x,t)- \frac{\partial u}{\partial t} (x,t) \in \partial E_\infty (u(x,t))  \qquad & x\in \partial \Omega, t>0, \\[7pt]
u(x,0)=u_0(x) \qquad & x \in \partial \Omega.
\end{array}
\right.
\end{equation}

If we assume that $u_0 \in L^1 (\partial \Omega)$ and $f$ is such that
$$
\sup_{t \in [0,T]} \int_{\partial \Omega} |f (x,t) | d\sigma (x) +  \int_{\partial \Omega} \Big| \frac{\partial f}{\partial t} (x,t)\Big| d\sigma (x) < +\infty.  
$$
Then, there exists a subsequence $p_i \to \infty$  such that 
\begin{equation} \label{convergencias}
\begin{array}{l}
\displaystyle 
u_{p_i} \to u_\infty \qquad \mbox{ a.e. and strongly in } L^2(\partial \Omega \times [0, T]), \\[7pt]
\displaystyle \nabla u_{p_i} \rightharpoonup \nabla u_\infty \qquad \mbox{ weakly in } L^2(\partial \Omega \times [0, T]), \\[7pt] 
\displaystyle \frac{\partial u_{p_i}}{\partial t} \rightharpoonup \frac{\partial u_\infty}{\partial t} \qquad \mbox{ weakly in } L^2(\partial \Omega \times [0, T]).
\end{array}
\end{equation}
\end{theorem}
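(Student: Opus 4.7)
The plan is to recast problem \eqref{p-lapla-intro} as the abstract Hilbert-space gradient flow $u_p'(t) + \partial E_p(u_p(t)) \ni f(\cdot,t)$ on $L^2(\partial\Omega)$, transfer the Mosco convergence of Theorem \ref{teo.Mosco} to convergence of the associated nonlinear semigroups via the Brezis-Attouch stability theorem, and finally deduce the stronger convergences \eqref{convergencias} from uniform-in-$p$ a priori bounds.

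First I would check that $E_p$ and $E_\infty$ are proper, convex, and lower semicontinuous on $L^2(\partial\Omega)$, and identify $\partial E_p$. Convexity of $E_p$ is inherited from the convexity of $v \mapsto \tfrac{1}{p}\int_\Omega |\nabla v|^p$ through the linear trace constraint, while lower semicontinuity follows from standard weak compactness arguments for Sobolev traces. For the subdifferential, if $v_p$ denotes the unique $p$-harmonic extension of $u$, a first variation combined with integration by parts and the equation $\Delta_p v_p = 0$ yields
\[
\partial E_p(u) = \Big\{ |\nabla v_p|^{p-2}\,\partial_\eta v_p\big|_{\partial\Omega} \Big\}
\]
in $L^2(\partial\Omega)$. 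Hence \eqref{p-lapla-intro} is precisely the Cauchy problem $u_p' + \partial E_p(u_p) \ni f$, $u_p(0) = u_0$, and by the analogous reasoning the limit problem \eqref{eq-limite} is $u_\infty' + \partial E_\infty(u_\infty) \ni f$, $u_\infty(0) = u_0$.

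The hypothesis $u_0 \in \overline{A_\infty}^{L^2(\partial\Omega)} = \overline{D(E_\infty)}$ together with $f \in L^1(0,T:L^2(\partial\Omega))$ guarantees existence and uniqueness of strong solutions for each $p$ and for the limit. Theorem \ref{teo.Mosco} together with the classical Brezis-Attouch theorem on the stability of nonlinear semigroups under Mosco convergence of their generating convex functionals then yields directly
\[
u_p \to u_\infty \quad \text{in } C([0,T]:L^2(\partial\Omega)),
\]
which is \eqref{eq.conver}.

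For the additional convergences \eqref{convergencias} under the reinforced $L^1_x$ hypotheses on $f$ and $\partial_t f$, I would derive uniform-in-$p$ a priori bounds. Pairing the evolution with $u_p$ and using $p$-harmonicity of $v_p$ gives the energy identity
\[
\tfrac{1}{2}\tfrac{d}{dt}\|u_p\|_{L^2(\partial\Omega)}^2 + \int_\Omega |\nabla v_p|^p = \int_{\partial\Omega} f\,u_p,
\]
which combined with Gronwall produces uniform $L^1_t$ control of $\int_\Omega |\nabla v_p|^p$ and, by H\"older's inequality, uniform $L^2$ bounds on $\nabla v_p$ (and on the tangential gradient of $u_p$ by trace). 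To control $\partial_t u_p$, I would invoke a Benilan-Crandall type argument for the $L^1$-contractive semigroup generated by $\partial E_p$: under the $L^1_x$-regularity of $f$ and $\partial_t f$ this yields a uniform $L^\infty_t L^1_x$ bound on $\partial_t u_p$, to be bootstrapped, together with the energy estimates above, to a uniform $L^2$ bound. Banach-Alaoglu then supplies a subsequence with the claimed weak limits, identified with $\nabla u_\infty$ and $\partial_t u_\infty$ by uniqueness of distributional limits and the previous step, while the pointwise convergence comes from a further subsequence extracted from the strong $L^2$-convergence already established. The main obstacle is this last step: producing a uniform $L^2$ bound on $\partial_t u_p$ from data that are only $L^1$ in space, since the natural $p$-Laplacian energy estimates live on the $L^p$ scale; this requires careful exploitation of the $L^1$-contractive structure of the semigroup generated by $\partial E_p$, and is not automatic from the Mosco framework.
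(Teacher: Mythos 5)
The first half of your argument --- recasting \eqref{p-lapla-intro} as the gradient flow of $E_p$ on $L^2(\partial\Omega)$, identifying $\partial E_p(u)$ with the weighted normal derivative of the $p$-harmonic extension, and invoking the Brezis--Pazy/Attouch stability theorem (Theorem \ref{convergencia1.Mosco}) together with Theorem \ref{teo.Mosco} --- is exactly the paper's route to \eqref{eq.conver} and \eqref{eq-limite}, and is fine.

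The gap is in the second half, and you have in effect flagged it yourself: you never actually produce the uniform $L^2$ bound on $\partial_t u_p$. A B\'enilan--Crandall $L^1$-contraction argument would at best give an $L^\infty_t L^1_x$ bound, and there is no smoothing mechanism here that upgrades $L^1_x$ to $L^2_x$; ``to be bootstrapped \dots to a uniform $L^2$ bound'' is the assertion to be proved, not a proof. The paper closes this step in Lemma \ref{lema.cotas} by a more elementary route. First one establishes $\sup_{\partial\Omega\times[0,T]}|u_p|\leq C$ uniformly in $p\geq N+1$, by combining the $L^1(\partial\Omega)$ bound obtained from testing with $\beta(u_p)\approx \mathrm{sgn}(u_p)$ with the embedding $\|u_p(t)\|_{L^\infty(\partial\Omega)}\leq C\big(\|\nabla u_p(t)\|_{L^p(\Omega)}+\|u_p(t)\|_{L^1(\partial\Omega)}\big)$, whose constant is independent of $p\geq N+1$, and an absorption argument. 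With the uniform $L^\infty$ bound in hand, one tests the equation with $\partial_t u_p$ and integrates by parts in time in the source term,
\[
\int_0^T\!\!\int_{\partial\Omega} f\,\partial_t u_p
= -\int_0^T\!\!\int_{\partial\Omega} \frac{\partial f}{\partial t}\, u_p
+ \int_{\partial\Omega} f(T)\,u_p(T) - \int_{\partial\Omega} f(0)\,u_0 ,
\]
so that every term on the right is controlled by $C(f)\sup_t\|u_p(t)\|_{L^\infty(\partial\Omega)}$ --- this is precisely where the hypothesis on $\partial_t f$ enters --- while the term $\int_\Omega\frac1p|\nabla u_0|^p\leq |\Omega|/p$ is handled using an extension of $u_0$ with gradient at most $1$. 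This yields $\int_0^T\int_{\partial\Omega}|\partial_t u_p|^2\leq C$ directly, with no detour through $L^1$ contraction. Once that bound and the energy bound on $\int_0^T\int_\Omega|\nabla u_p|^p$ are available, the weak compactness and identification of the limits in \eqref{convergencias} go through as you describe.
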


Finally, we relate the limit problem with an optimal mass transport problem in Theorem \ref{teo-mass}. the optimal mass transport problem is
defined with a cost given by the distance between points on $\partial \Omega$ considering paths inside $\Omega$, that is defined as the minimum of the lengths 
of the paths inside $\Omega$ that join the two points. We call this distance $d_\Omega$.   
It turns out that the limit of the solution to the limit problem, $u_\infty(\cdot,t)$, is a Kantorovich potential for
the optimal mass transport problem between $f(\cdot,t)$ and $\frac{\partial u_\infty}{\partial t} (\cdot,t)$. 

\begin{theorem} \label{teo-mass}
The solution to the limit problem \eqref{eq-limite} satisfies
$$
\begin{array}{l}
\displaystyle 
\int_{\partial \Omega } 
u_\infty (x,t) \Big(\frac{\partial u_\infty }{\partial t}(x,t) - f(x,t) \Big) d\sigma (x) \\[7pt]
\qquad \displaystyle = \max_{v: | v(x)-v(y)| \leq d_\Omega (x,y)} 
\int_{\partial \Omega } 
v(x) \Big(\frac{\partial u_\infty }{\partial t}(x,t) - f(x,t)\Big) d\sigma (x),
\end{array}
$$
that is, $u_\infty $ is a Kantorovich potential for the dual formulation of the Monge-Kantorovich mass transport problem 
between $f(\cdot,t) d\sigma $ and $\frac{\partial u_\infty }{\partial t} (\cdot,t) d\sigma $.
\end{theorem}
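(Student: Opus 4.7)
The plan is to translate the subdifferential inclusion that characterises the limit problem into the Kantorovich--Rubinstein dual of the mass transport problem with cost $d_\Omega$. The crucial preliminary step is to identify the convex set $A_\infty$ with the set of $d_\Omega$-Lipschitz functions on $\partial\Omega$:
$$
A_\infty = \bigl\{ u \in C(\partial\Omega) : |u(x) - u(y)| \le d_\Omega(x,y) \ \forall\, x,y \in \partial\Omega \bigr\}.
$$
The inclusion $\subseteq$ is immediate since any $v:\overline\Omega\to\mathbb{R}$ with $|\nabla v|\le 1$ a.e.\ is $1$-Lipschitz with respect to $d_\Omega$; the converse is furnished by the McShane-type extension $v(y):=\inf_{x\in\partial\Omega}\bigl(u(x)+d_\Omega(x,y)\bigr)$, which is $d_\Omega$-Lipschitz on $\overline\Omega$, hence satisfies $|\nabla v|\le 1$ a.e., and coincides with $u$ on $\partial\Omega$.

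Since $E_\infty$ is the convex indicator of the convex set $A_\infty$, its subdifferential at any $u\in A_\infty$ is the outward normal cone
$$
\partial E_\infty(u)=N_{A_\infty}(u)=\Bigl\{\xi\in L^2(\partial\Omega) : \int_{\partial\Omega}\xi(w-u)\,d\sigma\le 0 \text{ for every } w\in A_\infty\Bigr\}.
$$
Inserting $\xi=f(\cdot,t)-\partial_t u_\infty(\cdot,t)$, which belongs to $\partial E_\infty(u_\infty(\cdot,t))$ by Theorem \ref{corol.converg}, shows that for each $t$ the function $u_\infty(\cdot,t)$ extremises the linear functional $v\mapsto\int_{\partial\Omega} v\,(f-\partial_t u_\infty)\,d\sigma$ over $A_\infty$. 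Testing against the constants $v\equiv c\in\mathbb{R}\subset A_\infty$ in this extremality relation also yields the mass-balance identity $\int_{\partial\Omega}(f-\partial_t u_\infty)\,d\sigma=0$, so that $f(\cdot,t)\,d\sigma$ and $\partial_t u_\infty(\cdot,t)\,d\sigma$ share the same total mass, which is the compatibility needed for a well-posed transport problem.

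The final step is to invoke the Kantorovich--Rubinstein representation: for two measures of equal mass on $\partial\Omega$ with transport cost $c(x,y)=d_\Omega(x,y)$, the optimal transport cost equals
$$
\sup\Bigl\{\int v\,(d\mu-d\nu) : |v(x)-v(y)|\le d_\Omega(x,y)\Bigr\},
$$
and any maximiser is, by definition, a Kantorovich potential. Combining this with the identification from Step~1 (the admissible class in the supremum is precisely $A_\infty$ modulo additive constants, so the supremum is attained), the extremality of $u_\infty(\cdot,t)$ derived in Step~2 translates directly into the displayed identity of the theorem, namely that $u_\infty(\cdot,t)$ is a Kantorovich potential for the mass transport between $f(\cdot,t)\,d\sigma$ and $\partial_t u_\infty(\cdot,t)\,d\sigma$. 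The main technical point in the whole argument is the identification of $A_\infty$ in Step~1; the remainder is the standard convex-analytic interpretation of the evolution equation supplied by Theorem \ref{corol.converg} rewritten in the language of transport duality.
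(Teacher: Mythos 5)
Your proof is correct and follows essentially the same route as the paper: the subdifferential inclusion for the indicator functional $E_\infty$ yields extremality of $u_\infty(\cdot,t)$ for the linear functional $v\mapsto\int_{\partial\Omega}v\,(\partial_t u_\infty-f)\,d\sigma$ over $A_\infty$, and the bound $|\nabla u_\infty|\le 1$ integrated along curves in $\Omega$ gives admissibility. The one place you go beyond the paper is the McShane extension $v(y)=\inf_{x\in\partial\Omega}\bigl(u(x)+d_\Omega(x,y)\bigr)$ proving that every $d_\Omega$-Lipschitz function on $\partial\Omega$ belongs to $A_\infty$; the paper uses this identification only implicitly (it derives the extremality for $v\in A_\infty$ but then asserts it for all $d_\Omega$-Lipschitz $v$), so making that step explicit is a small but genuine improvement in rigor rather than a different method.
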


Therefore, as was pointed out in \cite{AEW}, the limit problem  \eqref{eq-limite} 
can be interpreted as a model for the formation and growth of a sandpile where particles of sand are distributed on $\partial \Omega$
(here $u_\infty (x,t)$ describes the amount of the sand at the point $x$ at time $t$). 
The main assumption being that the sandpile is stable when the slope is less than or equal to one and unstable if not.

We also include some explicit examples of solutions to the limit problem. In these examples 
one can appreciate the mass transport interpretation of the limit problem. Also we illustrate
a curious phenomenon, the support of the solution on $\partial \Omega$ may be disconnected even
if the domain is strictly convex, the initial condition is zero and the reaction has connected support.

Dynamical boundary conditions appear in modeling physical phenomena when there is a thin layer 
around the boundary in which reaction takes place. We refer to \cite{11,12,13,15,Latorre,Marcone,27} for general references concerning 
evolution problems with this kind of boundary conditions.

As a precedent concerning limits as $p\to \infty$, we mention that the problem \eqref{p-lapla-intro} in the elliptic (time independent) case
was studied in \cite{gampr} (see also \cite{gampr2} for the associated eigenvalue problem). 
Here one needs to assume that
$\int_{\partial \Omega} f =0$
(otherwise, there is no solution) and in order to have uniqueness of solutions one normalizes according to 
$\int_{\partial \Omega}u =0$. 

Concerning evolution problems with the $p-$Laplacian, the counterpart of our results for the Cauchy problem
was obtained 
in \cite{AEW} and \cite{EFG}. In those references it was studied the limiting behavior as $p\to \infty$ 
of solutions to the quasilinear parabolic problem
$$
\left\{
\begin{array}{ll}
\displaystyle \frac{\partial v}{\partial t} (x,t) - \Delta_p v (x,t) = f (x,t), \quad & \mbox{in }(0,T)\times \mathbb{R}^N, \\[7pt]
v(x,0) = u_0(x), \quad & \mbox{ in }  \mathbb{R}^N.
\end{array} \right.
$$
In \cite{AEW}, assuming that $u_0$ is a Lipschitz function with compact support, satisfying $|\nabla u_0| \leq 1$,
it is proved that $v_p \to v_\infty$ and the limit function $v_\infty$ satisfies
$$
f (x,t) - \frac{\partial v_\infty}{\partial t} (x,t) \in \partial F_\infty (v_\infty (x,t)),
$$
with
$$
F_\infty (v)=
\left\{
\begin{array}{ll} 0, \qquad &\mbox{ if } |\nabla v|\leq 1, \\[7pt]
+\infty , \qquad & \mbox{ in other case.}
\end{array}
\right.
$$

Other related papers that deal with limits as $p\to \infty$ in $p-$Laplacian
problems are \cite{BBM,BBP,Bocea,HL}. The relation between a limit as $p\to \infty$ in a $p-$Laplacian problem and optimal mass transport 
was first found in \cite{EG} (see also \cite{BBP}).

\medskip

The rest of the paper is organized as follows: in Section \ref{sect-prelim} we gather some preliminary results 
concerning Mosco convergence of functionals; in Section \ref{sect-proof} we prove the convergence of the functionals $E_p$ to $E_\infty$ stated
Theorem \ref{teo.Mosco} and we deduce the convergence of the solutions to the evolution problems in Theorem \ref{corol.converg}.
In Section \ref{sect-Mass} we deal with the Mass transport interpretation of the limit problem.
Finally, in Section \ref{sect-examples} we include some explicit examples of solutions to the limit problem.

\section{Preliminaries} \label{sect-prelim}

Next, we recall the definition of Mosco-convergence. If $X$ is a
metric space, and $\{A_n\}$ is a sequence of subsets of $X$, we
define
$$\liminf_{n \to \infty} A_n := \Big\{ x \in X \ : \
\exists x_n \in A_n, \ x_n \to x \Big\},$$
and $$
\limsup_{n \to
\infty} A_n := \Big\{ x \in X \ : \ \exists x_{n_k} \in A_{n_k}, \
x_{n_k} \to x \Big\}.$$ 
If $X$ is a normed space, we denote by $s-\lim$
and $w-\lim$ the above limits associated, respectively, to the
strong and to the weak topology of $X$.

\begin{definition}\label{d1}
Let $H$ be a Hilbert space. Given $\Psi_n, \Psi : H \rightarrow (-
\infty, + \infty]$ convex, lower-semicontinuous functionals, we say
that $\Psi_n$ converges to $\Psi$ in the sense of Mosco if
\begin{equation}\label{e1Mos.prelim}
w-\limsup_{n \to \infty} {\rm Epi}(\Psi_n) \subset {\rm Epi}(\Psi)
\subset s-\liminf_{n \to \infty}{\rm Epi}(\Psi_n),
\end{equation}
where ${\rm Epi} (\Psi_n)$ and ${\rm Epi} (\Psi)$ denote the
epigraphs of the functionals $\Psi_n$ and $\Psi$, defined by
$${\rm Epi}(\Psi_n):=\Big\{(u,\lambda)\in L^2(\RR^N)\times\RR \ : \
\lambda\geq \Psi_n(u)\Big\}, $$ 
and $$
{\rm Epi}(\Psi)
:=\Big\{(u,\lambda)\in L^2(\RR^N)\times\RR \ : \ \lambda\geq \Psi(u)\Big\}\,.$$
\end{definition}

\begin{remark} {\rm
We note that (\ref{e1Mos.prelim}) is equivalent to the requirement
that the following two conditions are simultaneously satisfied:
\begin{equation}\label{e2Mos.prelim}
\forall \, u \in D(\Psi) \ \ \exists u_n \in D(\Psi_n) \ : \ u_n \to u \ \ \hbox{and} \ \
\Psi(u) \geq \limsup_{n \to \infty} \Psi_n(u_n);
\end{equation}
\begin{equation}\label{e3Mos.prelim}
\hbox{for every subsequence} \ \{n_k\}, \ \Psi(u) \leq \liminf_{k}
\Psi_{n_k}(u_k) \ \hbox{whenever} \ u_{k}\rightharpoonup u.
\end{equation}
Here $D(\Psi) := \{ u \in H \ : \ \Psi (u) < \infty \}$ and
$D(\Psi_n) := \{ u \in H \ : \ \Psi_n (u) < \infty \}$ denote the
domains of $\Psi$ and $\Psi_n$, respectively.}
\end{remark}

To identify the limit of the solutions $u_n$ to problem
\eqref{p-lapla-intro} (see the Introduction), we will use
the methods of Convex Analysis, and so we must first recall some
terminology (see
\cite{ET},
\cite{Brezis} and \cite{Attouch}).

If $H$ is a real Hilbert space with inner product $( \cdot , \cdot)$ 
and $\Psi : H \rightarrow (- \infty, + \infty]$ is convex, then
the subdifferential of $\Psi$ is defined as the multivalued operator
$\partial \Psi$ given by
$$v \in \partial \Psi(u) \ \iff \ \Psi(w) - \Psi(u) \geq (v, w -u)
\ \ \ \forall \, w \in H.$$
Recall that the epigraph of $\Psi$ is defined by
$${\rm Epi}(\Psi) = \Big\{ (u, \lambda) \in H \times \RR \ : \  \lambda \geq \Psi(u) \Big\}.$$
Given $K$  a closed convex subset of $H$, we define the indicator
function of $K$ by
$$
I_K(u) = \left\{ \begin{array}{ll} 0 \qquad & \hbox{if} \ \ u \in
K
,\\[7pt]
+ \infty
\qquad & \hbox{if} \ \ u \not\in K. \end{array}\right.
$$
Then the subdifferential is characterized by
\begin{equation}\nonumber\label{e1Mazo}v \in \partial I_K(u) \
\iff \ u \in K \ \ \hbox{and} \ \ (v, w - u ) \leq 0 \ \ \ \forall \, w \in K.
\end{equation}

When the convex functional  $\Psi : H \rightarrow (- \infty, +
\infty]$ is proper, lower-semicontinuous, and such that $\min
\Psi=0$, it is well known (see \cite{Brezis}) that the abstract
Cauchy problem
\begin{equation}\nonumber\label{ACP}
\left\{\begin{array}{ll} u_t + \partial \Psi (u) \ni
f, \quad &\hbox{ a.e} \ t \in (0,T),
\\[7pt]
u(0)=u_0,&
\end{array}\right.
\end{equation}
has a unique solution for any $f \in L^1(0,T;H)$ and $u_0 \in
\overline{D(\partial \Psi)}$.

The Mosco convergence is a very useful tool to study convergence of
solutions of parabolic problems. The following theorem is a
consequence of results in \cite{BP} and \cite{Attouch}.

\begin{theorem}\label{convergencia1.Mosco} Let $\Psi_n, \Psi : H
\rightarrow (- \infty, + \infty]$  be convex and lower
semicontinuous functionals. Then the following statements are
equivalent:
\begin{itemize}
\item[(i)] $\Psi_n$ converges to $\Psi$ in the sense
of Mosco.

\medskip

\item[(ii)] $(I + \lambda \partial \Psi_n)^{-1} u \to (I + \lambda \partial \Psi)^{-1}
u$ \ \ $\forall \, \lambda > 0, \ u \in H$.
\end{itemize}
Moreover, either one of the above conditions, $(i)$ or $(ii)$, imply
that
\begin{itemize}
\item[(iii)] for every $u_0 \in \overline{D(\partial \Psi)}$ and
$u_{0,n} \in \overline{D(\partial \Psi_n)}$ such that $u_{0,n} \to
u_0$, and for every $f_n, f \in L^1 (0,T; H)$ with $f_n \to f$, if
$u_n(t)$, $u(t)$ are solutions of the abstract Cauchy problems
$$
\left\{\begin{array}{ll} (u_n)_t + \partial \Psi_n (u_n)
\ni f_n \quad &\hbox{ a.e.} \ \ t \in (0,T)
\\[7pt]
u_{n}(0)=u_{0,n},&
\end{array}\right.  $$
and
$$
\left\{\begin{array}{ll} u_t + \partial \Psi (u) \ni f
\quad &\hbox{ a.e.} \ \ t \in (0,T)
\\[7pt]
u(0)=u_0,&
\end{array}\right.  $$
respectively, then $$u_n \to u \qquad \mbox{ in } C([0,T]: H).$$
\end{itemize}
\end{theorem}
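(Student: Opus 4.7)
The plan is to treat this as a classical Mosco--Attouch--Br\'ezis--Pazy result: prove the equivalence (i) $\Leftrightarrow$ (ii) by passing through the variational characterization of the resolvent, and then deduce (iii) from (ii) via the Crandall--Liggett exponential formula. Throughout, the key object is the Moreau--Yosida functional $w\mapsto \Psi(w)+\frac{1}{2\lambda}\|w-u\|^2$ whose unique minimizer is exactly $J_\lambda u:=(I+\lambda\partial\Psi)^{-1}u$, and analogously $J^n_\lambda u=(I+\lambda\partial\Psi_n)^{-1}u$. This converts the resolvent convergence in (ii) into a question about $\Gamma$-convergence of quadratically perturbed functionals, which is precisely what the Mosco condition controls.

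For (i) $\Rightarrow$ (ii), I would fix $u\in H$, $\lambda>0$, and set $v_n:=J^n_\lambda u$. Testing minimality of $v_n$ against a fixed element in $D(\Psi_n)$ coming from the recovery sequence (\ref{e2Mos.prelim}) applied to any $u^\star\in D(\Psi)$, one obtains a uniform bound on $\|v_n\|$ and on $\Psi_n(v_n)$. Extract a weakly convergent subsequence $v_n\rightharpoonup \bar v$. The liminf condition (\ref{e3Mos.prelim}) yields $\Psi(\bar v)+\frac{1}{2\lambda}\|\bar v-u\|^2\leq \liminf_n\big[\Psi_n(v_n)+\frac{1}{2\lambda}\|v_n-u\|^2\big]$, using lower semicontinuity of the norm. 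On the other hand, given $v:=J_\lambda u$, the recovery sequence (\ref{e2Mos.prelim}) produces $\tilde v_n\to v$ strongly with $\limsup\Psi_n(\tilde v_n)\leq\Psi(v)$; minimality of $v_n$ against $\tilde v_n$ gives the reverse inequality. Uniqueness of the Moreau minimizer forces $\bar v=v$, and the sandwich of the quadratic terms upgrades weak to strong convergence. For (ii) $\Rightarrow$ (i), use that the Yosida approximations $\Psi_n^\lambda(u)=\Psi_n(J^n_\lambda u)+\frac{1}{2\lambda}\|J^n_\lambda u-u\|^2$ converge pointwise to $\Psi^\lambda(u)$, and then recover (\ref{e2Mos.prelim})--(\ref{e3Mos.prelim}) from the monotone increase $\Psi^\lambda\nearrow\Psi$ as $\lambda\downarrow 0$ together with a diagonal argument.

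Finally, for (iii), I would invoke the Br\'ezis--Pazy theorem: when the resolvents of maximal monotone operators $\partial\Psi_n$ converge pointwise in $H$ to the resolvent of $\partial\Psi$, the semigroups $S_n(t)$ converge to $S(t)$ uniformly on compact subsets of $[0,\infty)$, and this extends to the non-homogeneous Cauchy problem with $L^1$ forcing by approximation (Crandall--Liggett formula and the contraction property $\|J^n_\lambda u-J^n_\lambda w\|\leq \|u-w\|$). Combining this with the assumed convergences $u_{0,n}\to u_0$ and $f_n\to f$ in $L^1(0,T;H)$ yields $u_n\to u$ in $C([0,T];H)$.

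The main obstacle, and the only genuinely delicate point, is the strong (rather than merely weak) convergence of the resolvents in the first implication: one needs to exploit strict convexity of the $\frac{1}{2\lambda}\|\cdot-u\|^2$ term together with the Mosco hypothesis to squeeze the quadratic norms between matching $\limsup$ and $\liminf$, thereby promoting weak subsequential limits to strong convergence of the full sequence. Once this is in hand, the remaining implications are essentially bookkeeping using standard maximal monotone operator theory.
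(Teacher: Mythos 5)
The paper does not prove this theorem at all: it is quoted as ``a consequence of results in \cite{BP} and \cite{Attouch}'', so there is no internal argument to compare against. Your proposal is essentially the standard textbook proof of that cited result (Attouch's equivalence between Mosco convergence and resolvent convergence, plus Br\'ezis--Pazy for the Cauchy problems), and the (i) $\Rightarrow$ (ii) and (ii) $\Rightarrow$ (iii) parts of your sketch are sound: the Moreau--Yosida variational characterization of the resolvent, the liminf/limsup sandwich forcing $\bar v=J_\lambda u$, the upgrade from weak to strong convergence via convergence of the norms, and the passage from semigroup convergence to the non-homogeneous problem via the contraction estimate are all the right steps. One technical point you gloss over in (i) $\Rightarrow$ (ii): bounding $\Psi_n(v_n)+\frac{1}{2\lambda}\|v_n-u\|^2$ from above does not by itself bound $\|v_n\|$ unless you first know the $\Psi_n$ are equi-minorized by a common affine function; this equi-minorization is a separate lemma (it follows from the Mosco liminf condition by a uniform-boundedness argument, and is trivial in the paper's application since $E_p\ge 0$).

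The genuine gap is in your (ii) $\Rightarrow$ (i) step. Pointwise convergence of the resolvents does \emph{not} imply pointwise convergence of the Moreau--Yosida envelopes $\Psi_n^\lambda(u)=\Psi_n(J_\lambda^n u)+\frac{1}{2\lambda}\|J_\lambda^n u-u\|^2$, because the resolvents only see $\partial\Psi_n$, which determines $\Psi_n$ up to an additive constant: taking $\Psi_n=\Psi+n$ gives identical resolvents for all $n$ but no Mosco convergence of the functionals. Attouch's actual theorem therefore includes a normalization hypothesis (existence of $(u_n,v_n)\in\partial\Psi_n$ converging to some $(u,v)\in\partial\Psi$ with $\Psi_n(u_n)\to\Psi(u)$), which is what lets you anchor the constants and then run your monotone-approximation and diagonal argument. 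As stated, the implication (ii) $\Rightarrow$ (i) needs this extra condition (harmless in the present paper, where $\min E_p=0$ is attained and only the chain (i) $\Rightarrow$ (ii) $\Rightarrow$ (iii) is used), and your proof of it cannot close without invoking it.
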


\section{Mosco convergence of the functionals and convergence of the solutions} \label{sect-proof}

First, we show some uniform bounds (independent of $p$) for the solutions $u_p$ to \eqref{p-lapla-intro}.

\begin{lemma} \label{lema.cotas} Fix $T>0$. Assume that $u_0 \in L^1 (\partial \Omega)$ and $f$ is such that
\begin{equation} \label{Cf}
C(f) := \sup_{t \in [0,T]} \int_{\partial \Omega} |f (x,t)| d\sigma(x) 
+  \int_{\partial \Omega} \Big| \frac{\partial f}{\partial t} (x,t) \Big| d\sigma(x) < +\infty.  
\end{equation}

Then, there exists a constant $C$ such that
\begin{equation} \label{cotas.eq}
\begin{array}{l}
\displaystyle 
\sup_{\partial \Omega \times [0,T]} |u_p | \leq C, \\[7pt]
\displaystyle \int_0^T \int_{\partial \Omega} \Big|\frac{\partial u_p}{\partial t} \Big|^2 \leq C, \\[7pt] 
\displaystyle \left(\int_0^T \int_{\partial \Omega}|\nabla u_p |^p \right)^{1/p} \leq C^{1/p},
\end{array}
\end{equation}
for every $N+1 \leq p <\infty$. The constant $C$ depends on $u_0$, $C(f)$ and $T$.
\end{lemma}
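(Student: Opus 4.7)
The plan is to prove the three bounds in \eqref{cotas.eq} in sequence, combining a comparison argument for the evolution with the general theory of gradient flows of convex functionals.

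For the $L^\infty$-bound I would construct a constant-in-$x$ supersolution. Set
\[
\bar u(t) := \|u_0\|_{L^\infty(\partial\Omega)} + \int_0^t \|f(\cdot,s)\|_{L^\infty(\partial\Omega)}\, ds,
\]
and view $\bar u(t)$ as a function on $\overline{\Omega}$, constant in $x$. Then $\Delta_p \bar u = 0$ in $\Omega$ and $|\nabla \bar u|^{p-2}\partial_\eta \bar u \equiv 0$ on $\partial\Omega$, while $\bar u'(t) = \|f(\cdot,t)\|_{L^\infty(\partial\Omega)} \geq f(x,t)$ a.e.; hence $\bar u$ is a supersolution of \eqref{p-lapla-intro} with $\bar u(0)\geq u_0$. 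Since the evolution \eqref{p-lapla-intro} is $T$-accretive (equivalently, satisfies a comparison principle), one obtains $u_p \leq \bar u$, and the symmetric argument with $-\bar u$ as subsolution gives the matching lower bound, uniformly in $p$.

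For the $L^2$-bound on $\partial_t u_p$ I would use the B\'enilan--Crandall regularity estimate for the Hilbert-space gradient flow $\partial_t u_p + \partial E_p(u_p) \ni f$. Applying the monotonicity inequality to the pair $(u_p(\cdot,t+h),u_p(\cdot,t))$ with forcing terms $(f(\cdot,t+h),f(\cdot,t))$, dividing by $h$, and passing to the limit $h \to 0^+$, yields
\[
\|\partial_t u_p(\cdot,t)\|_{L^2(\partial\Omega)} \leq \|\partial_t u_p(\cdot,0^+)\|_{L^2(\partial\Omega)} + \int_0^t \|\partial_t f(\cdot,s)\|_{L^2(\partial\Omega)}\, ds,
\]
and the equation itself gives $\partial_t u_p(\cdot,0^+) = f(\cdot,0) - (\partial E_p)^0 u_0$. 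Squaring and integrating in time then produces the second estimate, provided the right-hand side is controlled uniformly in $p$. This is where the main obstacle lies: one must show that $\|(\partial E_p)^0 u_0\|_{L^2(\partial\Omega)} = \bigl\||\nabla v_0|^{p-2}\partial_\eta v_0\bigr\|_{L^2(\partial\Omega)}$ (with $v_0$ the $p$-harmonic extension of $u_0$) remains bounded as $p\to\infty$. When $u_0$ admits a $1$-Lipschitz extension to $\Omega$, this is immediate since the energy minimality of $v_0$ forces $|\nabla v_0|\leq 1$; for a general $L^1$ initial datum one has to combine approximation with the $L^\infty$-bound from the previous step and the $T$-accretivity of the semigroup, which is the most delicate part of the argument.

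For the $L^p$-gradient estimate I would test the boundary equation with $u_p$ and integrate over $\partial\Omega$. Using $\Delta_p u_p = 0$ in $\Omega$ together with the divergence theorem,
\[
\int_{\partial\Omega} u_p\,|\nabla u_p|^{p-2}\partial_\eta u_p\, d\sigma = \int_\Omega \nabla u_p\cdot(|\nabla u_p|^{p-2}\nabla u_p)\, dx = \int_\Omega |\nabla u_p|^p\,dx,
\]
so the boundary equation yields the energy identity
\[
\frac{1}{2}\frac{d}{dt}\int_{\partial\Omega} u_p^2\, d\sigma + \int_\Omega |\nabla u_p|^p\, dx = \int_{\partial\Omega} f u_p\, d\sigma.
\]
Integrating over $[0,T]$ and estimating the right-hand side by the $L^\infty$-bound on $u_p$ obtained in the first step together with $\sup_{t\in[0,T]}\int_{\partial\Omega}|f|\leq C(f)$, one obtains $\int_0^T\int_\Omega|\nabla u_p|^p\,dx\,dt \leq C$ with $C$ independent of $p$. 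Raising to the $1/p$-th power gives the desired bound $C^{1/p}$.
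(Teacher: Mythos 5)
Your proposal has two genuine gaps, both traceable to the same issue: you are implicitly assuming more regularity on the data than the lemma grants. The hypotheses are only $u_0\in L^1(\partial\Omega)$ and the $L^1$-in-$x$ control \eqref{Cf} on $f$ and $\partial f/\partial t$. Your first step builds the supersolution $\bar u(t)=\|u_0\|_{L^\infty(\partial\Omega)}+\int_0^t\|f(\cdot,s)\|_{L^\infty(\partial\Omega)}\,ds$, which is simply not finite under these hypotheses, so the comparison argument cannot start (and even if it could, the resulting constant would depend on $L^\infty$ norms of the data, not on $\|u_0\|_{L^1}$ and $C(f)$ as claimed). The entire difficulty of the first estimate is precisely to upgrade $L^1$ data to an $L^\infty$ bound on $u_p$, and the mechanism the paper uses is the one your plan skips: for $p\geq N+1$ one has $\|u_p(t)\|_{L^\infty(\partial\Omega)}\leq C\{\|\nabla u_p(t)\|_{L^p(\Omega)}+\|u_p(t)\|_{L^1(\partial\Omega)}\}$ with $C$ independent of $p$; combining this (raised to the power $p$, integrated in time) with the $L^1$ bound obtained by testing with a smoothed sign of $u_p$ and with the energy identity you wrote in your third step, an absorption argument closes a self-referential inequality for $\int_0^T\|u_p\|^p_{L^\infty(\partial\Omega)}$. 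Note the logical order is the reverse of yours: the energy identity is an input to the $L^\infty$ bound, not a consequence of it.

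The second gap is in the time-derivative estimate, and you have correctly located it yourself: the B\'enilan--Crandall route requires controlling $\|(\partial E_p)^0 u_0\|_{L^2(\partial\Omega)}$ uniformly in $p$, which you do not do, and it additionally needs $\int_0^t\|\partial_t f(\cdot,s)\|_{L^2(\partial\Omega)}\,ds$ while \eqref{Cf} only provides $L^1$-in-$x$ control of $\partial_t f$. The paper avoids both problems by testing the weak formulation directly with $\partial_t u_p$ and integrating by parts in time in the source term, which yields
\begin{equation}
\int_0^T\!\!\int_{\partial\Omega}\Big|\frac{\partial u_p}{\partial t}\Big|^2+\frac1p\int_\Omega|\nabla u_p|^p(T)
=\frac1p\int_\Omega|\nabla u_0|^p-\int_0^T\!\!\int_{\partial\Omega}\frac{\partial f}{\partial t}\,u_p
+\int_{\partial\Omega}f(T)u_p(T)-\int_{\partial\Omega}f(0)u_0;
\end{equation}
every term on the right is then controlled by $C(f)$ times the already-established $L^\infty$ bound on $u_p$ (together with $|\nabla u_0|\leq 1$, so that $\frac1p\int_\Omega|\nabla u_0|^p\leq|\Omega|/p$). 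Only your third step (the energy identity and the resulting gradient bound) matches the paper and is sound as written, conditional on the $L^\infty$ bound being available.
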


\begin{proof} Along this proof we denote by ${C}$ a generic constant that depends only on $u_0$, $C(f)$ and $T$
and may change from one line to another.

Now, we argue with the weak form of \eqref{p-lapla-intro}. It holds that
$$
\int_0^t \int_{\partial \Omega} \frac{\partial u_p}{\partial t} v + \int_0^t \int_\Omega
|\nabla u_p|^{p-2} \nabla u_p \nabla v  =  \int_0^t \int_{\partial \Omega} f v.
$$
Choose a smooth, nondecreasing function $\beta : \mathbb{R} \mapsto \mathbb{R}$ 
such that $\beta (x)=sgn(x)$ for $|x| \geq \delta >0$. By approximation we 
set $v=\beta (u_p)$ as the test function in the weak form of \eqref{p-lapla-intro}, to obtain
$$
\int_0^t \int_{\partial \Omega} \frac{\partial u_p}{\partial t} \beta (u_p)   \leq  \int_0^t \int_{\partial \Omega} f \beta (u_p).
$$
Hence, we get, 
$$
\int_{\partial \Omega} B(u_p) (t)  - \int_{\partial \Omega}  B(u_0)   =
\int_0^t \int_{\partial \Omega} \frac{\partial B(u_p)}{\partial t}   \leq  \int_0^t \int_{\partial \Omega} f \beta (u_p),
$$
here $B$ satisfies $B' (s)=\beta(s)$. Letting $\delta \to 0$ we obtain 
$$
\sup_{t \in [0,T]}\int_{\partial \Omega} |u_p| (t)  \leq \int_{\partial \Omega}  |u_0| + \int_0^T \int_{\partial \Omega} |f|
\leq \|u_0\|_{L^1 (\partial \Omega)} + C(f) T ,
$$
where $C(f)$ is the constant that depends on $f$ given in \eqref{Cf}. 

Now, if we take $v=u_p$ as a test function we get
$$
\int_0^t \int_{\partial \Omega} \frac{\partial u_p}{\partial t} u_p + \int_0^t \int_\Omega
|\nabla u_p|^{p}  =  \int_0^t \int_{\partial \Omega} f u_p.
$$
Since
$$
  \int_0^t \int_{\partial \Omega} f u_p 
  \leq C(f) \int_0^T \|u_p \|_{L^\infty (\partial \Omega)} ,
  $$
we obtain
\begin{equation} \label{ooo}
\frac12 \int_{\partial \Omega}| u_p|^2 (t) + \int_0^t \int_\Omega
|\nabla u_p|^{p}  \leq \frac12 \int_{\partial \Omega}| u_0|^2 +  C(f) \int_0^T \|u_p \|_{L^\infty (\partial \Omega)} .
\end{equation}
Hence, 
\begin{equation} \label{ooopp}
\begin{array}{l}
\displaystyle 
\sup_{t \in [0,T]} \frac12 \int_{\partial \Omega}| u_p|^2 (t) + \int_0^T \int_\Omega
|\nabla u_p|^{p} \\[7pt]
\qquad \displaystyle  \leq \frac12 \int_{\partial \Omega}| u_0|^2 +  C(f) \int_0^T \|u_p \|_{L^\infty (\partial \Omega)} .
\end{array}
\end{equation}

Since $u_p$ belongs to $W^{1,p} (\Omega)$, for $p \geq N+1$ we have
$$
\begin{array}{l}
\displaystyle 
\|u_p (t) \|_{L^\infty (\partial \Omega)}  \leq C \Big\{ \|\nabla u_p (t) \|_{L^{N+1} (\Omega)} + \| u_p (t) \|_{L^1(\partial \Omega)} \Big\}
\\[7pt]
\displaystyle \qquad \qquad \qquad 
\leq C \Big\{ \|\nabla u_p (t) \|_{L^{p} (\Omega)} + \| u_p (t)\|_{L^1(\partial \Omega)} \Big\}.
\end{array}
$$
The constant $C$ in this inequality in independent of $p\geq N+1$, then we have
\begin{equation} \label{pp}
\begin{array}{l}
\displaystyle 
\|u_p (t) \|^p_{L^\infty (\partial \Omega)} \leq C^p \Big\{ \|\nabla u_p (t) \|^p_{L^{p} (\Omega)} + \| u_p (t) \|^p_{L^1(\partial \Omega)} \Big\}
\\[7pt]
\displaystyle \qquad \qquad \qquad \leq C^p \Big\{ \|\nabla u_p (t) \|^p_{L^{p} (\Omega)} + C^p \Big\}.
\end{array}
\end{equation}
Therefore, 
$$
\int_0^T \|u_p (s) \|^p_{L^\infty (\partial \Omega)} \leq 
\displaystyle C^p \Big\{ \int_0^T \|\nabla u_p (s) \|^p_{L^{p} (\Omega)}  
+ C^p T \Big\}. 
$$
Using \eqref{ooopp} we obtain
$$
\begin{array}{rl}
\displaystyle 
\int_0^T \|u_p (s) \|^p_{L^\infty (\partial \Omega)} \leq  
& \displaystyle C^p \Big\{ \frac12 \int_{\partial \Omega}| u_0|^2 +  C(f) \int_0^T \|u_p (s)\|_{L^\infty (\partial \Omega)}  
+ C^p T \Big\} \\[7pt]
\leq & \displaystyle C^p \|u_0\|_{L^2 (\partial \Omega)}^2 + C^p C(f) \Big( \int_0^T \|u_p (s) \|^p_{L^\infty (\partial \Omega)}  \Big)^{1/p }  T^{1-1/p}
+ C^p T
\\[7pt]
\leq & \displaystyle  {C}^{p^2 / (p-1)} + \frac12 \int_0^T \|u_p  (s) \|^p_{L^\infty (\partial \Omega)}.  
\end{array}
$$
Hence, we obtain
$$ 
\Big( \int_0^T \|u_p (s) \|^p_{L^\infty (\partial \Omega)}  \Big)^{1/p} \leq 
C.
$$
Here the constant $C$ is independent of $p$. 

Then, \eqref{ooopp} implies 
$$
\Big(  \int_0^T \int_\Omega
|\nabla u_p|^{p}  \Big)^{1/p} \leq C^{1/p}.
$$

By an approximation procedure we can use $v=\frac{\partial u}{\partial t}$ as test function to obtain
$$
\int_0^T \int_{\partial \Omega} \Big| \frac{\partial u_p}{\partial t} \Big|^2 + \int_0^T \int_\Omega
\frac{\partial}{\partial t} \frac1p |\nabla u_p|^{p}   =  \int_0^T \int_{\partial \Omega} f \frac{\partial u_p}{\partial t}.
$$
Integrating by parts in time in the last integral, we obtain
\begin{equation} \label{kk}
\begin{array}{l}
\displaystyle 
\int_0^T \int_{\partial \Omega} \Big| \frac{\partial u_p}{\partial t} \Big|^2 +  \int_\Omega
\frac1p |\nabla u_p|^{p} (T)  \\[7pt]
 \displaystyle \qquad = \int_\Omega
\frac1p |\nabla u_0|^{p}  - \int_0^T \int_{\partial \Omega}  \frac{\partial f}{\partial t} u_p 
\\[7pt]
 \displaystyle \qquad \qquad + \int_{\partial \Omega} f(T) u_p (T) - \int_{\partial \Omega} f(0) u_0.
 \end{array} 
\end{equation}
Hence, 
$$
\begin{array}{rl}
\displaystyle
\int_\Omega
|\nabla u_p|^{p} (T) & \displaystyle \leq \int_\Omega
|\nabla u_0|^{p} + p C (f) \int_0^T \|u_p (s)\|_{L^\infty (\partial \Omega)}
 \\[7pt]
& \displaystyle  \qquad
+ p C (f) \|u_p (T)\|_{L^\infty (\partial \Omega)} + pC (f) \|u_0 \|_{L^\infty (\partial \Omega)}
 \\[7pt]
 & \displaystyle 
\leq  \int_\Omega
|\nabla u_0|^{p} + p C + p C \|u_p (T)\|_{L^\infty (\partial \Omega)}.
\end{array}
$$
Now, using \eqref{pp} we get
$$
\begin{array}{rl}
\displaystyle
\|u_p (T) \|^p_{L^\infty (\partial \Omega)} & \displaystyle \leq C^p \Big( \int_\Omega
|\nabla u_0|^{p} + p C + p C \|u_p (T)\|_{L^\infty (\partial \Omega)} \Big) + C^p
\\[7pt]
& \displaystyle  
\leq \frac12 \|u_p (T) \|^p_{L^\infty (\partial \Omega)} + (C^p p C)^{p/(p-1)} + C^p
\end{array} 
$$
and then we conclude that
$$
\|u_p (T) \|_{L^\infty (\partial \Omega)}  \leq C.
$$
As $T$ is any time we obtain
$$
\sup_{t \in [0,T]} \|u_p (t) \|_{L^\infty (\partial \Omega)}  \leq C.
$$

Finally, since $|\nabla u_0| \leq 1$, from \eqref{kk} we conclude that
$$
\int_0^T \int_{\partial \Omega} \Big|\frac{\partial u_p}{\partial t} \Big|^2 \leq C
$$
This ends the proof.
\end{proof}

Now, we prove that the functionals $E_p$ converge in the sense of Mosco to the limit functional $E_\infty$.

\begin{proof}[Proof of Theorem \ref{teo.Mosco}]
First, we want to show that \eqref{e2Mos.prelim} holds, that is,
\begin{equation}\label{e2Mos.sec}
\forall \, u \in D(E_\infty) \ \ \exists u_p \in D(E_p) \ : \ u_p \to u \ \ \hbox{and} \ \
E (u) \geq \limsup_{n \to \infty} E_p(u_p).
\end{equation}
Given $u \in D(E_\infty)$, that is, $u \in A_\infty$, we just take
$$
u_p \equiv u
$$
as the desired sequence. 
We clearly have $u_p \to u$ strongly in $L^2 (\partial \Omega)$. 

Now, from the fact that $u\in A_\infty$ we have that there exists
$v^* :\overline{\Omega} \mapsto \mathbb{R}$ with $ |\nabla v^*|\leq 1 \mbox{ a.e } \Omega$ and
$v^*|_{\partial \Omega} = u$. Hence, we obtain that $u \in D(E_p)$, that is, $u\in trace (W^{1,p} (\Omega)$
and 
$$
\begin{array}{rl}
E_p(u_p)  & \displaystyle =  \min_{v \in W^{1,p} (\Omega), trace (v)=u} \frac1p \int_\Omega |\nabla v|^p \\[7pt]
 & \displaystyle \leq  \frac1p \int_\Omega |\nabla v^*|^p \\[7pt]
 & \displaystyle \leq  \frac1p |\Omega | \to 0 \qquad \mbox{as } p \to \infty.
 \end{array}
$$
Then we have, 
$$
0=E (u) \geq \limsup_{n \to \infty} E_p(u_p) =0
$$
as we wanted to show.

Now, we have to prove that \eqref{e3Mos.prelim} also holds, namely,
\begin{equation}\label{e3Mos.sec}
\hbox{for every subsequence} \ \{p_k\}, \ E_\infty (u) \leq \liminf_{k}
E_{p_k}(u_k) \ \hbox{whenever} \ u_{k}\rightharpoonup u.
\end{equation}
To see this, we first observe that when $u \in A_\infty = D(E_\infty)$ we have 
$E_\infty (u)=0$ and we trivially obtain $E_\infty (u) \leq \liminf_{k}
E_{p_k}(u_k)$ since $E_{p_k} (u_k) \geq 0$. 

Also, we can assume that $\liminf_{k}
E_{p_k}(u_k) < +\infty$ (otherwise the desired inequality holds trivially). 
Hence, for a subsequence we have that there is a constant $C$ such that
$$
\frac1{p_k} \min_{v \in W^{1,p_k} (\Omega), trace (v)=u_k}  \int_\Omega |\nabla v|^{p_k} \leq C.
$$
Call $v_k$ a function in $W^{1,p_k} (\Omega)$ that attains the minimum. For this $v_k$ we have 
$$
\left( \int_\Omega |\nabla v_k|^{p_k} \right)^{1/p_k} \leq ( p_k C )^{1/p_k}.
$$
Now, for $2<q<\infty$, we obtain
$$
\begin{array}{rl}
\displaystyle 
\left( \int_\Omega |\nabla v_k|^{q} \right)^{1/q} & \displaystyle \leq 
|\Omega|^{(p_k-q)/p_k q} \left( \int_\Omega |\nabla v_k|^{p_k} \right)^{1/p_k} \\[7pt]
& \displaystyle  \leq |\Omega|^{(p_k-q)/p_k q}
( p_k C )^{1/p_k}.
\end{array}
$$
The right hand side is bounded and hence we can take the limit as $p_k\to \infty$ to obtain
that $v_k \rightharpoonup v^*$ weakly in $W^{1,q} (\Omega)$. This limit $v^*$ verifies
$$
\left( \int_\Omega |\nabla v^*|^{q} \right)^{1/q}  
\leq |\Omega|^{1/q}.
$$
Hence, taking $q\to \infty$ we conclude that $v^* \in W^{1,\infty} (\Omega)$
and 
$$
|\nabla v^*| \leq 1, \qquad \mbox{a.e } \Omega.
$$
Now, from the weak convergence of $v_k$ to $v^*$ in $W^{1,q} (\Omega)$
using the Sobolev trace embedding we get that $u_k = trace (v_k) \to u=trace(v^*)$ strongly in $L^2 (\partial \Omega)$
and hence we have that $u \in A_\infty = D(E_\infty)$. Then, we have  $$0= E_\infty (u) \leq \liminf_{k}
E_{p_k}(u_k)$$ since $E_{p_k} (u_k) \geq 0$, as we wanted to show.
\end{proof}

As a consequence we obtain the convergence of the corresponding solutions to the associated evolution problems.

\begin{proof}[Proof of Theorem \ref{corol.converg}]
We can apply Theorem \ref{convergencia1.Mosco} to obtain the first part of the result, namely, 
$$
u_p \to u_\infty
$$
as $p\to \infty$ in $C([0,T]: L^2(\partial \Omega))$ and the limit $u_\infty$ is characterized as the solution to 
the limit problem
\eqref{eq-limite}.

To complete the proof we observe that, from the uniform bounds obtained in Lemma \ref{lema.cotas}, we obtain the existence of a subsequence $p_i \to \infty$ such that
the convergences stated in \eqref{convergencias} hold. 
\end{proof}

\section{Mass transport interpretation of the limit problem} \label{sect-Mass}

We relate the limit problem with an optimal mass transport problem 
with a cost given by the distance between points inside $\Omega$
that is defined as the infimum of the lengths of curves going from $x$ to $y$, that is,
$$
d_\Omega (x,y) = \inf_{\gamma (0) = x, \gamma (1) =y} \mbox{lenght} (\gamma (t)).
$$
When the domain $\Omega$ is convex the distance $d_\Omega$ coincides with the Euclidean distance, we have $d_\Omega (x,y) = |x-y|$.

Given two measures $\mu$, $\nu$ on $\partial \Omega$ with the same total mass we consider
the transport cost (Monge-Kantorovich mass transport problem)
$$
C(\mu,\nu) = \min_{\theta (x,y): \theta|_x = \mu, \theta|_y = \nu} \int_{\partial \Omega \times \partial \Omega} 
d_{\Omega} (x,y) d\theta (x,y).
$$
Here by $\theta|_x$ we denote the first marginal of $\theta$, that is, $\theta|_x (E) = \theta (E \times \partial \Omega)$
(and similarly with $\theta|_y$ we denote the second marginal of $\theta$).

Associated with an optimal mass transport problem we have its dual formulation that is given by
$$
C(\mu,\nu) = \max_{v: | v(x)-v(y)| \leq d_\Omega (x,y)} \int_{\partial \Omega } 
v(x) (d\mu(x) - d\nu(x)).
$$
Maximizers of the dual problem are called Kantorovich potentials for the optimal mass transport problem. 

It turns out that the limit of the solutions, $u_\infty(\cdot,t)$, is a Kantorovich potential for
the optimal mass transport problem between $f(\cdot,t)$ and $\frac{\partial u_\infty}{\partial t} (\cdot,t)$. 

\begin{proof}[Proof of Theorem \ref{teo-mass}]
First, let us prove that the limit function $u_\infty$ is admissible for the dual problem.
Given two points $x,y\in \partial \Omega$, using that $|\nabla u_\infty(\cdot,t)| \leq 1$ a.e. $\Omega$, we have 
$$
\begin{array}{rl}
\displaystyle 
|u_\infty(x,t) - u_\infty(y,t)| & \displaystyle = \left| \int_0^1 \frac{\partial u_\infty(\gamma(s) ,t)}{\partial s} (s) ds \right|
\\[7pt]
& \displaystyle = \left| \int_0^1 \langle \nabla u_\infty (\gamma(s),t) , \gamma'(s) \rangle ds
\right| \\[7pt]
& \displaystyle  \leq 
\mbox{lenght} (\gamma (s)) 
\end{array}
$$
and hence we obtain
$$
|u_\infty(x) - u_\infty(y)| \leq d_\Omega (x,y).
$$

Now, we show that in fact $u_\infty(\cdot,t)$ is a solution to the dual problem. We have that
$u_\infty(x,t)$ solves the limit equation,
$$
f(x,t)- \frac{\partial u_\infty}{\partial t} (x,t) \in \partial E_\infty (u(x,t))  
$$
that is,
$$
E_\infty (v(x)) \geq E_\infty (u_\infty(x,t)) + \int_{\partial \Omega} \Big( f(x,t)- \frac{\partial u_\infty}{\partial t} (x,t) \Big)
(v(x)-u_\infty (x,t)) 
$$
Take $v \in A_\infty $. Since $u_\infty(\cdot,t)\in A_\infty$ we have 
$$
0 \geq  \int_{\partial \Omega} \Big( f(x,t)- \frac{\partial u_\infty}{\partial t} (x,t) \Big)
(v(x)-u_\infty(x,t)) 
$$
and therefore, 
$$
\int_{\partial \Omega } 
u_\infty(x,t) \Big(\frac{\partial u_\infty}{\partial t}(x,t) - f(x,t) \Big) \geq 
\int_{\partial \Omega } 
v(x) \Big(\frac{\partial u_\infty}{\partial t}(x,t) - f(x,t) \Big),
$$
for every $v$ such that $| v(x)-v(y)| \leq d_\Omega (x,y)$.

We have obtained that $u_\infty(\cdot,t)$ is a Kantorovich potential for the optimal mass transport problem 
between $f(\cdot,t) d\sigma $ and $\frac{\partial u_\infty}{\partial t} (\cdot,t) d\sigma $.
\end{proof}

\section{Examples} \label{sect-examples}

In this final section we include some simple examples in which one can find the solution to the limit evolution problem,
\begin{equation}\label{eq-limite.examples}
\left\{
\begin{array}{ll}
\displaystyle f(x,t)- \frac{\partial u_\infty}{\partial t} (x,t) \in \partial E_\infty (u_\infty(x,t))  \qquad & x\in \partial \Omega, t>0, \\[7pt]
u(x,0)=u_0(x) \qquad & x \in \partial \Omega.
\end{array}
\right.
\end{equation}

{\bf Example 1.} First, let us deal with the $1-$dimensional case and consider $\Omega =(0,1)$, take 
$$
f(x,t) = 
\left\{ 
\begin{array}{ll}
0, \qquad & x=0, t>0,\\[7pt]
1, \qquad & x=1, t>0,
\end{array}
\right.
$$
(notice that $f$ is defined on $\partial \Omega \times (0,T)$) and
$$
u_0 \equiv 0.
$$

Then we have
$$
u_\infty(x,t) = 
\left\{ 
\begin{array}{ll}
0, \qquad  & x=0, 0\leq t \leq 1,\\[7pt]
t, \qquad & x=1, 0\leq t \leq 1,
\end{array}
\right.
$$
and 
$$
u_\infty(x,t) = 
\left\{ 
\begin{array}{ll}
\displaystyle \frac12 (t-1)  , \qquad & x=0, 1\leq t ,\\[7pt]
\displaystyle \frac12 (t-1) +1 , \qquad & x=1, 1\leq t .
\end{array}
\right.
$$

Notice that we have 
$$
|u_\infty(1,t) -u_\infty(0,t)| \leq 1 = d_\Omega (0,1)=1, \qquad \mbox{ for every } t \geq 0.
$$

Also remark that the solution starts to grow at $x=1$ with $\frac{\partial u_\infty}{\partial t} (1,t) = 1$
until it reaches $u_\infty(1,t_0)=1$ (this happens at $t_0=1$) and next it grows at the slower rate 
$\frac{\partial u_\infty}{\partial t} (1,t) = 1/2$ (but also grows at $x=0$ with $\frac{\partial u_\infty}{\partial t} (0,t) = 1/2$). This
is due to the fact that the unit mass added at $x=1$ is divided between two locations $x=0$ and $x=1$ in order to 
keep the constraint $|u_\infty(1,t) -u_\infty(0,t)| \leq 1$ for times $t \geq 1$.

\medskip

{\bf Example 2.} We can also consider a nontrivial initial condition for the setting considered in the previous example.

Let $\Omega =(0,1)$. Take, as before, 
$$
f(x,t) = 
\left\{ 
\begin{array}{ll}
0, \qquad & x=0, t>0,\\[7pt]
1, \qquad & x=1, t>0,
\end{array}
\right.
$$
(notice that $f$ is defined on $\partial \Omega \times (0,T)$) and fix a nonnegative $C^1$ initial condition $u_0$
with $|u_0' (x) | \leq 1$ for $x \in [0,1]$. 

Then we have
$$
u_\infty(x,t) = 
\left\{ 
\begin{array}{ll}
u_0(0), \qquad & x=0, 0\leq t \leq t_0,\\[7pt]
u_0(1) + t, \qquad & x=1, 0\leq t \leq t_0,
\end{array}
\right.
$$
with $t_0$ the first time at which $u_0(1)+t_0 - u_0(0) = 1$, that is
$$
t_0 = u_0(0) -u_0(1) +1. 
$$
Notice that $t_0 \geq 0$ due to the fact that $u_0(0) -u_0(1) +1 = u_0' (\xi) +1 \geq 0$. 
Also notice that $u_\infty(x,t) \in A_\infty$, since there exists a function $v$ with $|v'|\leq 1$ in $[0,1]$ such that $v(0)= u_0(0)$,
$v(1) = u_0(1) + t$ (in addition, this function $v$ can be chosen satisfying $v\geq u_0$ in $[0,1]$).

For times larger than $t_0$ we have
$$
u_\infty (x,t) = 
\left\{ 
\begin{array}{ll}
\displaystyle  u_0(0) + \frac12 (t-t_0)  , \qquad & x=0, t_0\leq t ,\\[7pt]
\displaystyle u_0(1) + \frac12 (t-t_0) +t_0 , \qquad & x=1, t_0 \leq t .
\end{array}
\right.
$$

\medskip

{\bf Example 3.} Now, we extend these ideas to several dimensions. 
Take a fixed domain $\Omega \subset \mathbb{R}^N$, fix a subdomain of its boundary 
$\Gamma \subset \partial \Omega$ and consider $f:\partial \Omega \times (0,T) \mapsto \mathbb{R}^N$,
$$
f(x,t) = \chi_{\Gamma} (x)
$$
and, as before, 
$$
u_0 \equiv 0. 
$$

Remark that our previous example, Example 1, is a particular case of this more general setting.

In this case the solution $u_\infty (x,t)$ to the limit problem is given by
$$
u_\infty (x,t) = (a(t) - d_\Omega (x,\Gamma))_+,
$$
with $a(t)$ the solution to the ODE
$$
\left\{
\begin{array}{l}
a'(t) \Big|\{ x\in \partial \Omega : d_\Omega (x, \Gamma) < a(t)\} \Big|_{H^{N-1}} = |\Gamma|_{H^{N-1}}, \\[7pt]
a(0)=0.
\end{array}
\right.
$$
Here we denoted by $| E |_{H^{N-1}}$ the $N-1$-dimensional surface measure of a measurable set $E \subset \partial \Omega$. 

Notice that the support of $u_\infty (\cdot, t)$ in $\partial \Omega$ can be disconnected even if
the domain is strictly convex and the set where the source is localized $\Gamma$ is connected.
In fact, this is the case when the set 
$$
\Big\{ x\in \partial \Omega : d_\Omega (x, \Gamma) < k \Big\}
$$
is disconnected for some $k>0$.  Also notice that, since $\Omega$ is bounded and $\partial \Omega$ is
smooth (it has finite $H^{N-1}-$measure), there exists a finite time $t_0$ such that the support of $u_\infty (\cdot,t)$ is the whole
$\partial \Omega$ for times $t\geq t_0$.
At this time $t_0$ we have 
$$
a(t_0) = \max_{x\in \partial \Omega} d_\Omega (x, \Gamma)
$$
and then we have
$$
u_\infty (x,t) = \Big(\max_{x\in \partial \Omega} d_\Omega (x, \Gamma) - dist(x,\Gamma) \Big),
$$
After this time the solution is given by 
$$
u_\infty (x,t) = \Big( \frac{|\Gamma|_{H^{N-1}}}{|\partial \Omega|_{H^{N-1}}} t + \max_{x\in \partial \Omega} d_\Omega (x, \Gamma)- dist(x,\Gamma)\Big),
$$
That is, after $t_0$ the solution grows uniformly in the whole $\partial \Omega$ with speed $\frac{|\Gamma|_{H^{N-1}}}{|\partial \Omega|_{H^{N-1}}} $.

\medskip

{\bf Acknowledgements.} J.D.R. is partially supported by 
CONICET grant PIP GI No 11220150100036CO
(Argentina), PICT-2018-03183 (Argentina) and UBACyT grant 20020160100155BA (Argentina).

\bigskip

\bigskip

%----------------------------------------------------------------------------------------
%	BIBLIOGRAPHY
%----------------------------------------------------------------------------------------

\end{document}